\theoremstyle{plain}
\newtheorem{theorem}{Theorem}[section]
\newtheorem{proposition}[theorem]{Proposition}
\theoremstyle{definition}
\newcommand{\R}{\mathbb{R}}
\newcommand{\ra}{{\rightarrow}}
\let\cal\mathcal
\begin{document}
\title[Degeneration]
        { Degeneration of strictly convex real projective structures on surface }
        \author{Inkang Kim}
        \date{}
        \maketitle

\begin{abstract}
In this paper we study  the degeneration of  convex real
projective structures on  bordered surfaces.
\end{abstract}
\footnotetext[1]{2000 {\sl{Mathematics Subject Classification.}}
51M10, 57S25.} \footnotetext[2]{{\sl{Key words and phrases.}}
Real projective surface, Goldman parameter, Bonahon-Dreyer parameters.} \footnotetext[3]{The author
gratefully acknowledges  the partial support
of grant  (NRF-2017R1A2A2A05001002).}

\section{Introduction}
Recently there has been intensive study of Anosov representations of Gromov hyperbolic groups into higher rank semisimple Lie groups extending the notion of convex cocompact representations in rank one semisimple Lie groups. On the other hand, for free groups, there is a notion of primitive stable representations proposed by Minsky \cite{Minsky}. This notion works well for representations with parabolic elements. It is known that the restriction of a primitive stable representation to a non-trivial free factor is Anosov \cite{KP}. Hence a primitive stable representation is a very good generalization of Anosov representations, yet not all representations with parabolics are primitive stable. For example, a finite volume strictly convex real projective surface with more than two cusps has a holonomy representation which is not primitive stable.
This can be easily seen  by choosing a free generating set containing one of the elements representing a hole. Yet a finite volume strictly convex real projective surface with one cusp has a primitive stable holonomy representation \cite{KK}.

Despite the intensive study of Anosov representations, its boundary of the space of Anosov representations is not well-understood. For example, for the quasifunchsian space, it is known that the cusps are dense on the boundary of quasifuchsian space. This is no longer true for Anosov representations.
Take the space $\cal A_2$ of Anosov representations from free group on 2 generators into $SL(3,\R)$.
Then $\cal A_2$ contains the space $\cal P$ of marked convex real projective structures on a pair of pants $P$ with hyperbolic geodesic boundaries.
It also contains the space $\cal T$ of marked  convex real projective structures on a puctured torus $T$ with a hyperbolic geodesic boundary. The real dimension of $\cal A_2$ is $-8\chi(T)=-8\chi(P)=8$. Since both $\cal T$ and $\cal P$ have real dimension 8, they are open subsets of $\cal A_2$.
The space $\cal T_c$ consisting of convex real projective structures  on $T$ whose boundary curve is parabolic has real dimension 6. By cutting  $T$ along one simple closed curve $C$, we can see
that any structure in $\cal T_c$ has two boundary parameters on $C$, two gluing parameters on $C$
and two internal parameters on $T\setminus C$. This shows that  $\cal T_c$ is included as a boundary of $\cal A_2$ of codimension 2. Similarly if we pinch one of the boundary of $P$ to a parabolic, then the real dimension drops by 2. This suggests that the cusps appear in the codimension 2 strata. Hence cusps are not dense in $\partial \cal A_2$.

Such a phenomenon seems generic in higher rank case and it manifests a sharp contrast to the rank one case.
In this paper, we are interested in such  degeneration phenomena in terms of Fock-Goncharov or Bonahon-Dreyer coordinates.

In this note, the following theorem is proved in Sections \ref{moduli} and \ref{degeneration}. An explicit conventions and notations can be found in Section \ref{degeneration}.
\begin{theorem} The space of Anosov representations $\cal A_2$ contains $\cal P$ and $\cal T$ as components, and the boundaries of $\cal P$ and $\cal T$ in the character variety $\chi_2$ can be described explicitly using Bonahon-Dreyer parameters when the holonomy of the boundary loops degenerate to parabolic or quasi-hyperbolic elements.\\
(i) For the boundary of $\cal P$
\begin{enumerate}
\item The space of convex projective structures on $P$ with one boundary, say $A_1$, being parabolic is parametrized by $$\sigma_1(B_1),\sigma_2(B_1),\sigma_1(B_2),\sigma_1(B_3),\tau_{111}(T_+),\tau_{111}(T_-).$$
\item The space of convex projective structures on $P$ with all boundaries being parabolic is parametrized by
$\sigma_1(B_1),\tau_{111}(T_+)$.
\item The space of convex projective structures on $P$ with a boundary $A_1$ being quasi-hyperbolic is parametrized by 
$$\sigma_1(B_1),\sigma_2(B_1),\sigma_1(B_2),\sigma_2(B_2),\sigma_1(B_3),\tau_{111}(T_+),\tau_{111}(T_-)   .$$
\end{enumerate}
(ii) For the boundary of $\cal T$
\begin{enumerate}
\item The space of convex projective structures on $T$ with boundary being parabolic is parametrized by
$$\sigma_1(B_1),\sigma_1(B_2),\sigma_1(B_3),\tau_{111}(T_+), \sigma_1(C),\sigma_2(C)$$ where $C$ is a meridan curve so that $T\setminus C$ is a pair of pants.
\item The space of convex projective structures on $T$ with boundary being quasi-hyperbolic is parametrized by
$$\sigma_1(B_1),\sigma_1(B_2),\sigma_1(B_3),\tau_{111}(T_+), \tau_{111}(T_-), \sigma_1(C),\sigma_2(C).$$
\end{enumerate}
\end{theorem}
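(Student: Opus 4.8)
The plan is to reduce the whole statement to one explicit computation — the boundary holonomy in Bonahon--Dreyer coordinates — and then to read off the three degeneration types from its eigenvalues. First I fix the ideal triangulation of the pair of pants $P$ consisting of the two triangles $T_+, T_-$ glued along the three boundary edges $B_1, B_2, B_3$. Attaching to each edge its two shear invariants $\sigma_1(B_i), \sigma_2(B_i)$ and to each triangle its invariant $\tau_{111}(T_\pm)$ produces eight Bonahon--Dreyer coordinates, and since the triangulation has no interior closed leaf there are no closed-leaf relations to impose, so these match $\dim_{\R}\cal P = 8$. I then write each boundary holonomy $\rho(A_i)\in SL(3,\R)$ as the ordered product of the elementary shear matrices and triangle matrices met by the loop $A_i$, and compute its characteristic polynomial $t^3 - p_i t^2 + q_i t - 1$, whose coefficients $p_i = \operatorname{tr}\rho(A_i)$ and $q_i = \operatorname{tr}\rho(A_i)^{-1}$ are explicit Laurent polynomials in the exponentiated coordinates.

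The core of the argument is to translate the boundary type at $A_1$ into equations in these coordinates. As the holonomy lies in the Hitchin locus, $\rho(A_1)$ has positive real eigenvalues: it is hyperbolic (three distinct eigenvalues) exactly on the open set where the discriminant of its characteristic polynomial is positive; it is quasi-hyperbolic precisely when that discriminant vanishes while $\rho(A_1)$ stays non-semisimple, a single equation and hence a codimension-one locus; and it is parabolic — regular unipotent with one $3\times3$ Jordan block — exactly when $p_1 = q_1 = 3$, two independent equations and hence codimension two. I then solve these relations as graphs: the quasi-hyperbolic discriminant relation is solved for $\sigma_2(B_3)$, which leaves the list of (i)(3); the two parabolic relations $p_1 = q_1 = 3$ are solved for $\sigma_2(B_2)$ and $\sigma_2(B_3)$, giving (i)(1); and imposing $p_i = q_i = 3$ at all three boundaries cuts the eight coordinates down to the two survivors $\sigma_1(B_1), \tau_{111}(T_+)$ of (i)(2), the expected dimension of the finite-volume convex projective structures on the thrice-punctured sphere. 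Here one must check that the relevant Jacobians do not vanish, so that the eliminated shear invariants are genuine smooth functions of the retained ones and the parametrizations are bijective onto the strata.

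For the torus $T$ I cut along the meridian $C$ to obtain a pair of pants whose three boundaries are $B_1$ (the true boundary of $T$) and the two copies of $C$. Starting from the pair-of-pants coordinates, I impose the gluing (closed-leaf) relations along $C$: these identify the holonomies of the two copies of $C$, eliminating $\sigma_2(B_2)$ and $\sigma_2(B_3)$, and I reintroduce the two shear parameters $\sigma_1(C), \sigma_2(C)$ of the curve $C$ (a twist and a bulge), recovering $\dim_{\R}\cal T = 8$. Superimposing the degeneration of $A_1 = B_1$ from the previous step then yields (ii): the parabolic condition removes two more parameters and leaves $\sigma_1(B_1),\sigma_1(B_2),\sigma_1(B_3),\tau_{111}(T_+),\sigma_1(C),\sigma_2(C)$, while the quasi-hyperbolic condition removes only one and additionally retains $\tau_{111}(T_-)$.

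Finally I verify that each stratum really is the frontier of $\cal P$ or $\cal T$ inside $\chi_2$ by checking that the limiting representations are holonomies of honest convex projective structures — a cusp, hence a finite-volume structure, in the parabolic case, and the expected intermediate structure in the quasi-hyperbolic case. I expect the decisive obstacle to be exactly the eigenvalue analysis of the second step: producing the explicit holonomy, separating the quasi-hyperbolic locus (discriminant zero and non-semisimple) from the spurious branch on the same discriminant hypersurface where $\rho(A_1)$ becomes diagonalizable with a repeated eigenvalue, and confirming that the eliminations stay compatible with the positivity that characterizes the convex projective (Hitchin) locus.
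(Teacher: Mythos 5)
There is a genuine gap: your proposal never addresses the first assertion of the theorem, that $\cal P$ and $\cal T$ are \emph{components} of $\cal A_2$. The dimension count gives openness, but closedness is the hard half, and it is where the paper spends all of Section \ref{moduli}: one renormalizes a convergent sequence $[\rho_i]\subset\cal C$ by minimal displacement sets, shows that a limit $\tau$ contained in a parabolic subgroup would have to preserve a geodesic and its parallel set, projects to $SL(2,\R)$ in the singular case, and derives a contradiction with positivity from the Choi--Goldman element of negative trace in the commutator subgroup; one then rules out parabolic or quasi-hyperbolic boundary holonomy for an Anosov limit (failure of quasi-geodesity of orbits, resp.\ failure of antipodality of the limit flags, using \cite{Marquis}). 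The same convergence argument is what proves that the frontier of $\cal C$ in $\chi_2$ consists exactly of holonomies of properly convex structures with parabolic or quasi-hyperbolic boundary; your closing paragraph assumes this identification rather than establishing it, and in particular it is this geometric input --- not anything visible in the coordinates --- that excludes the diagonalizable branch of your discriminant hypersurface.

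For the parametrization itself your route is genuinely different from the paper's and could be made to work, but the two steps you defer are precisely where the content lies. The paper does not compute $\rho(A_1)$ as a word in elementary matrices; it imposes the degeneration on Goldman's boundary invariants ($\lambda_1=1,\ \tau_1=2$ for parabolic; $\lambda_1=\nu,\ \tau_1=2\mu$ with $\mu^2\nu=1$ for quasi-hyperbolic) and pushes these through the explicit dictionary of Proposition 4.1 and 5.2 of \cite{BK}, obtaining \emph{linear} relations such as $\sigma_1(B_2)+\sigma_2(B_3)=0$ and $\sigma_1(B_1)-\sigma_2(B_1)+\sigma_1(B_3)-\sigma_2(B_2)=\log(s^4)$, from which the surviving independent coordinates are read off with no Jacobian to check. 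If you insist on working directly in Bonahon--Dreyer coordinates, note that the leaves $B_i$ of the lamination spiral into the boundary components, so a loop freely homotopic to $A_1$ crosses them infinitely often and the boundary holonomy is not the finite ordered product of elementary matrices you describe; you would instead need the Bonahon--Dreyer closed-leaf relations, which express $\log(\lambda_1/\lambda_2)$ and $\log(\lambda_2/\lambda_3)$ for $\rho(A_1)$ as explicit linear forms in the $\sigma$'s and $\tau$'s --- at which point the parabolic locus is two linear equations and the quasi-hyperbolic locus one, and the discriminant machinery is unnecessary. Your codimension counts and the lists of eliminated coordinates do agree with the paper's conclusions.
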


We also address the relationship between area and divergence;
\begin{theorem} If a triangle invariant or a difference of two shear coordinates along some geodesic diverges, then the Hilbert area of the convex projective surface tends to infinity.
\end{theorem}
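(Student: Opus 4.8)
The plan is to reduce the global statement to a local estimate on a single piece of the Bonahon--Dreyer triangulation and then to bound the Hilbert area of that piece from below by the diverging coordinate. Fix the ideal triangulation $\lambda$ underlying the coordinates, and let $\Omega\subset\mathbb{RP}^2$ be the developing image of the universal cover, a properly convex domain carrying the action of $\Gamma=\pi_1(S)$. The lifts of the ideal triangles of $\lambda$ tile $\Omega$, so the Hilbert area of the quotient splits as the finite sum
\[ \mathrm{Area}_{\mathrm{Hil}}(\Omega/\Gamma)=\sum_{t\in\lambda}\mathrm{Area}_{\mathrm{Hil}}(\tilde t), \]
where $\tilde t$ is a fundamental lift of the ideal triangle $t$. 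Since $\lambda$ has only finitely many triangles and edges, it is enough to show that the divergence of any single coordinate forces the Hilbert area of one triangle, or of a strip adjacent to one edge, to tend to infinity.

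First I would treat the triangle invariant. Each ideal vertex of $\tilde t$ carries the flag consisting of a boundary point $p_i\in\partial\Omega$ together with the supporting line $\ell_i$ of $\Omega$ at $p_i$, and $\tau_{111}(t)$ is (the logarithm of) the triple ratio of the three flags $(p_i,\ell_i)$. Normalizing $\tilde t$ by a projective transformation so that its vertices sit at the standard coordinate points, the triple ratio becomes an explicit ratio of the linear forms $\ell_i$ evaluated at the $p_j$. The geometric content of $\tau_{111}(t)\to\pm\infty$ is that the osculating data of $\partial\Omega$ at the three vertices degenerate: in the limit $\partial\Omega$ can no longer remain strictly convex and $C^1$ near a vertex, and a flat segment or cusp-like feature appears. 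I would make this quantitative by exhibiting, for large $|\tau_{111}(t)|$, an embedded region near the degenerating vertex and computing its Hilbert area integral in the normalized chart, showing that the area element blows up as the supporting configuration flattens.

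The shear case is handled along an edge $e$ shared by the two triangles $t_+$ and $t_-$. The pair $(\sigma_1(e),\sigma_2(e))$ records the relative position of the two transverse flags, and the difference $\sigma_1(e)-\sigma_2(e)$ measures the relative shearing of $t_-$ against $t_+$ across $e$; it is this projectively meaningful difference, rather than an individual shear, whose divergence is a genuine degeneration. As $\sigma_1(e)-\sigma_2(e)\to\pm\infty$ the gluing flattens $\partial\Omega$ along the boundary directions determined by $e$, and I would again produce an explicit embedded region in the quadrilateral $t_+\cup e\cup t_-$ whose Hilbert area I estimate directly in a chart adapted to $e$, obtaining divergence.

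The main obstacle is exactly this passage from the algebraic divergence of a Bonahon--Dreyer coordinate to genuine geometric divergence of Hilbert area. The essential subtlety is that Hilbert area is \emph{anti-monotone} under inclusion---a larger domain has a smaller metric---so one cannot obtain a lower bound for $\mathrm{Area}_{\mathrm{Hil}}(\tilde t)$ simply by enlarging $\Omega$ to a tractable model domain; a naive replacement by the outer triangle cut out by the supporting lines gives the useless bound $+\infty$. One must instead construct an explicit embedded region whose area is computed, or estimated asymptotically, in a normalized projective chart where the degeneration of the invariant is visible as a blow-up of the integrand. Carrying out this estimate so that it is uniform in the remaining bounded coordinates, so that no cancellation can occur in the finite sum over $\lambda$, is the technical heart of the argument.
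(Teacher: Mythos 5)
Your reduction to a single ideal triangle is the right first step and matches the paper, but the core of your plan has a genuine gap, and it stems from a factual error. You dismiss the comparison with the circumscribed tangent triangle on the grounds that it ``gives the useless bound $+\infty$''; in fact that comparison \emph{is} the paper's proof. If $T$ denotes the triangle cut out by the tangent lines to $\partial\Omega$ at the three vertices of the ideal triangle $\triangle$, then $\triangle\subset\Omega\subset T$ gives $\mathrm{Area}_T(\triangle)\le\mathrm{Area}_\Omega(\triangle)$, and $\mathrm{Area}_T(\triangle)$ is \emph{finite}: the vertices of $\triangle$ sit at interior points of the edges of $T$, where the Hilbert density of $T$ blows up only like the reciprocal of the distance to a single edge, while the wedge of $\triangle$ at such a vertex has Euclidean width proportional to that same distance, so each corner contributes a convergent integral. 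After normalizing $T$ to the standard triangle and the vertices of $\triangle$ to $(0,1/2),(1/2,1/2),(\alpha,0)$, the configuration is determined by the triangle invariant $(1-\alpha)/\alpha$ alone, and $\mathrm{Area}_T(\triangle(\alpha))\to\infty$ as $\alpha\to 0$ by the Colbois--Vernicos--Verovic results the paper cites. That is what converts divergence of $\tau_{111}$ into divergence of area. Your proposed substitute --- integrating the Hilbert area form of $\Omega$ itself near the degenerating vertex --- cannot be carried out from the coordinates, because the Bonahon--Dreyer data determine only the flags (boundary points and tangent lines), not the local shape of $\partial\Omega$; the only way to extract a lower bound from flag data alone is precisely the enlargement you rejected.

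For the shear case there is a second, related misidentification: the difference $\sigma_1-\sigma_2$ along a leaf $l$ is the \emph{bulging} parameter, not a ``relative shearing''; the sum $\sigma_1+\sigma_2$ is the shearing/twisting, and its divergence does \emph{not} force the area to blow up, as the paper notes explicitly at the end of Section 4. The mechanism the paper uses is that as the bulging parameter $v$ tends to $+\infty$ the third vertex $(1,e^{3v}y,x)$ of the adjacent ideal triangle is pushed toward $l^\perp$, the intersection of the tangent lines at $l(\pm\infty)$, so that triangle converges to the full tangent triangle and its area diverges; as $v\to-\infty$ the region on one side collapses to $l$ and the triangle on the other side degenerates similarly. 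Without the bulging interpretation, your ``chart adapted to $e$'' gives no reason why the difference, rather than the sum, is the dangerous quantity. In short: you correctly located the technical heart of the argument, but you discarded the one tool (anti-monotonicity of Hilbert area applied to the canonical circumscribed triangle determined by the flags) that carries it out, and what you propose in its place is not executable from the given data.
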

This theorem is proved in Section \ref{area}.
\vskip .1 in
{\bf Acknowledgements} The author thanks F. Bonahon for the conversations on projective structures and parametrizations during his visit to USC for several occasions. He also thanks S. Tillmann for his interests and
pointing out relevant references.
\section{Preliminaries}
\subsection{Isometries in Hilbert metric}
Suppose that $\Omega$ is a (not necessarily strictly) convex domain in
$\mathbb{RP}^2$. Choose an affine set $A$ containing $\Omega$ with a Euclidean norm
$| \cdot |$. For $x\neq y\in \Omega$, let $p,q$ be the
intersection points of the line $\overline{xy}$ with $\partial\Omega$ such that $p,x,y,q$ are in this order. The
{\bf Hilbert distance} is defined by
$$d_\Omega(x,y)=\frac{1}{2}\log \frac{|p-y||q-x|}{|p-x||q-y|}$$ where $| \cdot |$ is a Euclidean
norm. This metric coincides with the hyperbolic metric if
$\partial\Omega$ is a conic.
We introduce the notion of hyperbolic, quasi-hyperbolic and parabolic isometries as they will appear in the sequel.
An isometry is {\bf hyperbolic} if it can be represented by an element  with diagonal entries $\lambda_1>\lambda_2>\lambda_3>0$ in $SL(3,\R)$. It is {\bf quasi-hyperbolic} if 
it is conjugate to
$\begin{bmatrix}
 \mu & 1 & 0 \\
  0     & \mu & 0 \\
0 & 0 &  \nu\end{bmatrix}$ where $\mu>\nu>0, \mu^2\nu=1$. Finally  it is parabolic if it is conjugate to  $\begin{bmatrix} 
           1 & 1 & 0 \\
            0              & 1 & 1\\
            0           & 0 & 1\end{bmatrix}.$
\subsection{Bonahon-Dreyer parameters}
A flag in $\R^3$ is a family $F$ of subspaces $F^0\subset F^1 \subset F^2$ where $F^i$ has dimension $i$.
A pair of flags $(E,F)$ is generic if every $E^i$ is transverse to every $F^j$. Similarly a triple of flags $(E,F,G)$ is generic if each triple $E^i, F^j, G^k$ meets transversely.

 For such a generic triple, one defines a triangle invariant
$$T(E,F,G)=\frac{e^2\wedge f^1}{f^1\wedge g^2}\frac{ e^1\wedge g^2}{e^1\wedge f^2}\frac{ f^2\wedge g^1}{  e^2\wedge g^1},$$ where $0\neq e^i\in \wedge^i(E^i)$ and similar for $f^j, g^k$. Note that for $i+j=3$, $e^i \wedge f^j$ can be
identified with a real number by taking a determinant of corresponding $3\times 3$ matrix $(e^i, f^j)$.
Easy calculation shows that
$$ T(E,F,G)=T(F,G,E)=T(F,E,G)^{-1}.$$

Let $(E,F,G,L)$ be a generic flags in the sense that every quadruple $E^a,F^b, G^c, L^d$ meets transversely.
In this case we have two double ratios
$$ D_1(E,F,G,L)=-\frac{e^1\wedge f^1\wedge g^1}{e^1\wedge f^1 \wedge l^1}\frac{f^2\wedge l^1}{f^2\wedge g^1}$$
$$D_2(E,F,G,L)=-\frac{e^2\wedge g^1}{e^2\wedge l^1}\frac{e^1\wedge f^1\wedge l^1}{e^1\wedge f^1\wedge g^1}.$$

Let $\rho:\pi_1(\Sigma)\ra SL(3,\R)$ be a holonomy representation which gives rise to a  convex real projective structure on  $\Sigma$.  Give an ideal triangulation  to $\Sigma$ coming from the ideal triangulation of
each pair of pants in a fixed pants decomposition of $\Sigma$. Let $T_j$ be an ideal triangle and $\tilde T_j$ a lift
to $\tilde \Sigma$.  Let $\tilde v_j, \tilde v_j', \tilde v_j''\in\partial_\infty \tilde \Sigma$ be vertices of $\tilde T_j$ in clockwise
order.  Define the triangle invariant of $T_j$ to be
$$\tau_{111}^{\rho}(T_j, v_j)=\log T(\cal F_\rho(\tilde v_j), \cal F_\rho(\tilde v_j'), \cal F_\rho(\tilde v_j'')).$$
Here the flat $\cal F_\rho(\tilde v_j)$ is determined by the tangent line at $\tilde v_j$ to $\partial \Omega_\rho$ where $\Omega_\rho$ is a convex domain in $\mathbb{RP}^2$ determined by $\rho$.

Let $\tilde \lambda$ be an oriented leaf in $\tilde \Sigma$ such that two ideal triangles $\tilde T$ and $\tilde T'$ share
$\tilde \lambda$ such that $\tilde T$ lies on the left of $\tilde \lambda$. Let $x$ and $y$ be the positive and negative end points of $\tilde\lambda$ and $z,z'$ be third
vertices of $\tilde T$ and $\tilde T'$ respectively. Then $i$-th shear invariant along the oriented leaf $\lambda$ is
$$\sigma^\rho_i(\lambda)=\log D_i(\cal F_\rho x, \cal F_\rho y, \cal F_\rho z, \cal F_\rho z').$$

\subsection{Goldman parameters for a pair of pants}
Let $A,B,C$ denote the boundary of a pair of pants satisfying $CBA=I$. By abuse of notations, $A,B,C$ denote isometries corresponding to the curves, or repelling fixed points of isometries on the universal cover.  We adopt the notation that the eigenvalues
of the matrix satisfy that $\lambda_1>\lambda_2>\lambda_3$ whose product equal to 1 and
$$\ell_1=\log\lambda_1-\log\lambda_2,\ \ell_2=\log\lambda_2-\log\lambda_3.$$
Then the Hilbert length of a closed geodesic represented by $A$ is equal to $\ell_1(A)+\ell_2(A)$.
In Goldman's notation \cite{Goldman}, $\lambda(A)=\lambda_3(A)$ and $\tau(A)=\lambda_1(A)+\lambda_2(A)$.

Goldman showed that the convex structure on a pair of pants is determined by $(\lambda(A), \tau(A)), (\lambda(B),\tau(B)), (\lambda(C),\tau(C))$ plus two internal parameters $s$ and $t$.
For the precise definition of $s$ and $t$, it is advised to  consult \cite{BK} because some notations and conventions
between \cite{BK} and \cite{Goldman} are different.

\subsection{Bulging deformation}
Even if all the triangle invariants are bounded, if the difference of shear coordinates along some infinite edge of an ideal triangle  goes to infinity, then the Hilbert area tends to infinity as we will see in Proposition \ref{bulging}.
We recall a {\bf bulging deformation} along a geodesic. 

Let $\Omega$ be a convex domain and $l$ an oriented infinite geodesic in $\Omega$.  Draw two tangent lines to
$\Omega$ at $l(\pm\infty)$ and denote $l^\perp$ the intersection of the two tangent lines. Then the bulging deformation along
$l$ is stretching the domain toward $l^\perp$. See \cite{FK} for details. More precisely,  let $l(-\infty)$  be (1,0,0),  $l(\infty)$ be $(0,0,1)$ and  $l^\perp$ be $(0,1,0)$. Use the base $\{(1,0,0), (0,1,0), (0,0,1)\}$ to write a projective transformation in a matrix form. Then the bulging along $l$ corresponds to a matrix
$$\begin{pmatrix}
 e^{-v} & 0 & 0 \\
 0 & e^{2v} & 0 \\
 0 & 0 & e^{-v} \end{pmatrix}. $$   Let $(1,y, x)$ be the vertex of an ideal triangle whose one edge is $l$
and on the right side of $l$.
Under the bulging deformation along $l$, this vertex moves to $(1,e^{3v}y, x)$ and two shear parameters along $l$
under the bulging deformation changes:
$$\sigma_1 \ra \sigma_1 -3v, \sigma_2\ra \sigma_2 +3v.$$
Hence the difference $\sigma_2-\sigma_1$ measures the bulging parameter $6v$.

\section{The moduli space of Anosov representations and the space of convex projective structures}\label{moduli}
\subsection{Component of the moduli space of  Anosov representations}
As we  have already seen in the introduction, the space $\cal A_2$ of Anosov representations from the free group $F_2$ to $SL(3,\R)$ contains two disjoint spaces, $\cal P$ the space of convex real projective structures with geodesic boundary on the pair of pants $P$, and $\cal T$ the space of convex real projective structures with a geodesic boundary on the punctured torus $T$.  Here by Anosov we means  Borel-Anosov where Borel subgroup is the set of upper triangular matrices.
As we have seen in a previous section, $\cal P$ has a strata of boundary of codimension 2 by pinching each boundary to cusps.  On the other hands, $\cal T$ has a boundary of codimension 2, $\cal T_c$, where the  boundary of $T$  is a cusp.

It is obvious from the construction that $\overline{\cal P}$ and $\overline{ \cal T}$ are disjoint where the closures are taken in the space of Anosov representations. Since both
$\cal P$ and $\cal T$ are cells of dimension 8, they form two disjoint components of $\cal A_2$.
The openess of $\cal P$ and $\cal T$ follow from the explicit  coordinates.

In this section let $\Sigma$ be a bordered surface of genus $g$ with $n>0$-puncture.
Then the set $\cal C$ of marked convex projective structures on $\Sigma$ with hyperbolic geodesic boundaries form an open set in the moduli space of Anosov representations $\cal A$.
 To show the closedness of $\cal C$,
suppose $\rho_i:\pi_1(\Sigma)\ra SL(3,\R)$ corresponding to convex projective structures in $\cal C$, converges to $\rho$ in $\cal A$.

Let $X=SL(3,\R)/SO(3)$ and
fix a generating set $S$ of $\pi_1(\Sigma)$.  For a representation $\rho:\pi_1(\Sigma)\ra SL(3,\R)$, set $d_\rho(x)=\sup_{s\in S} d(x,\rho(s)x)$ and $\mu(\rho)=\inf_{x\in X}d_\rho(x)$.
Let $\text{Min}_\rho=\{x\in X|d_\rho(x)=\mu(\rho)\}$. This is a closed convex set of $X$.

It is not difficult to see that $\text{Min}_\rho$ is non-empty and bounded if and only if $\rho$ is not parabolic, i.e., $\rho(\pi_1(\Sigma))$ does not fix a point in $\partial X$. Indeed, if $\text{Min}_\rho$ is either empty or unbounded, one can find a sequence $x_i\ra \eta \in\partial X$ so that 
$$d(x_i, \rho(s)x_i)\leq \mu(\rho)+\epsilon$$ for all large $i$ and $s\in S$. This implies that $\eta$ is fixed by $\rho$ and hence $\rho$ is parabolic.

Let's fix a base point $x_0\in X$. By conjugating $\rho_i$ if necessary, we may assume that $x_0\in \text{Min}_{\rho_i}$. Note that $\text{Min}_{\rho_i}$ is not empty since $\rho_i$ is not parabolic. Since $\lim\sup\{\mu(\rho_i)=d_{\rho_i}(x_0)\}\leq\mu(\rho)$, one
can extract a subsequence converging to $\tau$. Then $\tau$ is again discrete and faithful. See \cite{GM} for example. 

 Suppose $\tau(\pi_1(\Sigma))$ is contained in a parabolic subgroup $G_\eta$ of $SL(3,\R)$. Here $G_\eta$ means a parabolic subgroup stabilizing $\eta$ in the visual boundary of $X$. In this case, $\text{Min}_\tau$ is either $\emptyset$ or unbounded.
Since $d_{\rho_i}$ converges uniformly to $d_\tau$, $x_0\in \text{Min}_\tau$, hence
$\text{Min}_\tau$ is unbounded. In this case one can show that  $\tau$ fixes two end points of some geodesic $l$ as follows. Let  $l$ be a geodesic emanating from $\eta$
so that $\text{Min}_\tau\cap l=L$ is noncompact. This is possible since $\text{Min}_\tau$ accumulates to $\eta$. We want to show that $L=l$. Choose $y\in L$ and take a generalized Iwasawa decomposition  of the parabolic subgroup $G_\eta$ as 
$G_\eta=N_\eta A_\eta K_\eta$ where $K_\eta$ is an isotropy subgroup of $K$, the maximal compact subgroup stabilizing $y$ ( indeed $K_\eta$ fixes $l$ pointwise), $A_\eta y$ is the union of parallels to $l$, and $N_\eta$ is the horospherical subgroup which is determined uniquely by $\eta$. See Proposition 2.17.5 (4) in \cite{Eberlein}.
Note for any $id\neq n\in N_\eta$, $nl$ and $l$ are not parallel, but they are asymptotic at $\eta$. Indeed
$d(l(t), nl(t))\ra 0$ as $\lim_{t\ra -\infty} l(t)=\eta$. This fact implies the following. For any $g\in \tau(S)$, where $S$ is the fixed generating set of $\pi_1(\Sigma)$, if $g=nak$, we claim that $n=id$. If not,
$d(nak(l(t)), l(t))=d(na(l(t)), l(t))$ is strictly decreasing as $t\ra -\infty$ since $al(t)$ is a geodesic emanating from $\eta$. Since $l(t)\in L$ for large negative $t$, this contradicts the definition of $\text{Min}_\tau$. Hence
any element in $\tau(S)$ sends $l$ to a parallel geodesic. Consequently, $\tau(\pi_1(\Sigma))$ fixes two end points of $l$ and $\tau(\pi_1(\Sigma))\subset A_\eta K_\eta$.

Let $W$ be
the union of parallels to $l$, which is isometric to $l\times Y$ where $Y$ is a closed convex complete
subset of X. See Lemma 2.4 of \cite{BGS}.

If $l$ is non-singular, $W$ is a maximal flat containing $l$. If we take a Iwasawa decomposition
$SL(3,\R)=KAN$ where $K$ is the stabilizer of $x_0$, $Ax_0$ is the maximal flat containing $l$, $N$ is the
Nilpotent subgroup fixing $l(-\infty)$, then $\tau(\pi_1(\Sigma))\subset MA$ since $\tau(\pi_1(\Sigma))$ stabiizes $W$, which is abelian. Since
$\pi_1(\Sigma)$ is free and $\tau$ is faithful, this is impossible.

If $l$ is singular, $W=l\times H^2$.  Since $\tau$ preseves this splitting, it can be conjugated so that
its image is contained in 
$$
\begin{pmatrix}
 M_{2\times 2} & 0 \\
      0                 & \lambda \end{pmatrix}.$$
Then the projection $$\pi:\tau(\pi_1(\Sigma))\ra Iso(H^2)=SL(2,\R), \begin{pmatrix} M & 0 \\
                                                            0 & \lambda\end{pmatrix}\ra \frac{1}{(det M)^{1/2}} M$$ is either discrete or solvable
by Proposition 7.2.2 of \cite{Eberlein}. The kernel of $\pi$ will be central in $\tau(\pi_1(\Sigma))$ since it is of the form $\begin{pmatrix}
 I  & 0 \\
 0 & \lambda \end{pmatrix}$, and since $\tau$ is faithful and $\pi_1(\Sigma)$ has no center, $\pi$ is injective.
Hence $\pi\tau(\pi_1(\Sigma))$ is a discrete and faithul representation into $SL(2,\R)$.
Note that $\tau([\pi_1(\Sigma),\pi_1(\Sigma)])$ has image in $\begin{pmatrix}  SL(2,\R) & 0 \\
                   0 & 1\end{pmatrix}$. Hence $\tau|_{[\pi_1(\Sigma),\pi_1(\Sigma)]}$ is a discrete and faithful representation into $SL(2,\R)$.  Then by Lemma 2 of \cite{CG}, there exists an element $\gamma\in [\pi_1(\Sigma),\pi_1(\Sigma)]$ such that $tr(\tau(\gamma))<0$, i.e., its eigenvalues are
$(-\lambda_1,-\lambda_2,1)$ with $\lambda_i>0$.  Since $\rho_i\ra\tau$,  $\rho_i(\gamma)$ and $\tau(\gamma)$ will have the same sign for their eigenvalues for large $i$. But since $\rho_i$ is a positive representation in the sense of Fock-Goncharov \cite{FG}, the eigenvalues of $\rho_i(\gamma)$ must be all positive, a contradiction. This shows that $\tau(\pi_1(\Sigma))$ cannot be contained in a parabolic subgroup.

Suppose $\tau(\pi_1(\Sigma))$ is not contained in a parabolic subgroup. Then it is an irreducible representation in $\R^3$. Let $\Omega_i$ be a convex set in $\mathbb{RP}^2$ whose quotient by $\rho_i(\pi_1(\Sigma))$ is the corresponding real projective surface. Take a Hausdorff limit $\Omega$ of $\Omega_i$, which is invariant under $\tau$. Then $\Omega$ should be properly convex, otherwise $\tau$ will be reducible.
Hence $\Omega/\tau(\pi_1(\Sigma))$ is a properly convex projective surface.

Since $\rho_i\ra\rho$ before conjugating $\rho_i$, $[\rho_i]\ra [\rho]=[\tau]$.
If $\tau$ belongs to  $\partial \cal C$, some boundary of $\Sigma$ must be either  parabolic or quasi-hyperbolic. For parabolic case,  the orbit in $X$ under such a parabolic element is not quasi-geodesic, which is not possible for Anosov representation. For quasi-hyperbolic case,  if $L$ is an axis of the quasi-hyperbolic isometry on the boundary of $\Omega$, which connects the repelling point $p_-$ and the attracting point $p_+$, then the tangent line at $p_-$ contains the axis, while the tangent line at $p_+$ is different from the axis (see \cite{Marquis}). Hence the limit curve determined by $\Omega$ does not satisfy the antipodality that the Anosov representation should have. 
 In either case, $\rho$ is not Anosov. Since $[\rho]$ belongs to $\cal A$ by assumption, $[\tau]\in\cal C$.

This argument shows that:
\begin{theorem} Let $\Sigma$ be  a punctured surface and $\cal A$  a moduli space of Anosov representations from $\pi_1(\Sigma)$ to $SL(3,\R)$. Then $\cal C$, the set of marked convex projective structures on $\Sigma$ with hyperbolic geodesic boundaries, is closed in $\cal A$. Hence $\cal C$ is a  component of $\cal A$.
\end{theorem}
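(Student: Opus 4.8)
The plan is to show that $\cal C$ is both open and closed in $\cal A$, so that it is a union of connected components; combined with the fact that each $\cal C$ is connected (which follows from the explicit Bonahon-Dreyer or Goldman parametrization by a cell), this yields the theorem. Openness is immediate: having hyperbolic boundary holonomy (eigenvalues $\lambda_1>\lambda_2>\lambda_3>0$ all real and distinct) is an open condition on the boundary matrices, and being a convex projective structure is an open condition among Anosov representations by the standard theory. So the substance is entirely in the closedness statement, which I would establish exactly along the lines already laid out in the preceding discussion.

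For closedness, I would take a sequence $\rho_i\in\cal C$ with $\rho_i\ra\rho$ in $\cal A$. After conjugating so that a fixed basepoint $x_0$ lies in each $\text{Min}_{\rho_i}$, I would extract a subsequential limit $\tau$ which is discrete and faithful (citing \cite{GM}), and which is conjugate to $\rho$ up to the action on the character variety. The first dichotomy is whether $\tau(\pi_1(\Sigma))$ lies in a parabolic subgroup $G_\eta$. I would rule this out by the argument given: if it did, $\text{Min}_\tau$ would be unbounded, forcing $\tau$ to fix both endpoints of a geodesic $l$ via the Iwasawa decomposition $G_\eta=N_\eta A_\eta K_\eta$ and the horospherical asymptotics that kill the $N_\eta$ component. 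Then splitting into the non-singular case (where $W$ is a maximal flat, forcing $\tau(\pi_1(\Sigma))\subset MA$ abelian, impossible for a faithful representation of a free group) and the singular case $W=l\times H^2$ (where the projection to $SL(2,\R)$ is discrete and faithful on the commutator subgroup by \cite{Eberlein}, \cite{CG}, producing an element with a sign obstruction contradicting positivity of $\rho_i$ in the sense of \cite{FG}) completes the exclusion.

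Once $\tau(\pi_1(\Sigma))$ is not in any parabolic subgroup, it is irreducible, and the Hausdorff limit $\Omega$ of the convex domains $\Omega_i$ is properly convex and $\tau$-invariant, so $\Omega/\tau(\pi_1(\Sigma))$ is a properly convex projective surface. The final step is to show $\tau$ cannot lie on $\partial\cal C$, i.e. no boundary curve of $\Sigma$ can degenerate to parabolic or quasi-hyperbolic holonomy while $\tau$ remains Anosov. For the parabolic case the orbit of a parabolic element fails to be quasi-geodesic, contradicting the Anosov condition; for the quasi-hyperbolic case the tangent-line behavior at the two endpoints of the axis (using \cite{Marquis}) violates the antipodality of the Anosov limit map. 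Since $\rho\in\cal A$ by hypothesis, neither degeneration occurs, so $[\tau]=[\rho]\in\cal C$.

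I expect the main obstacle to be the singular-flat case in the parabolic exclusion: transferring discreteness and faithfulness through the projection $\pi$ to $SL(2,\R)$, verifying that its kernel is central and hence trivial for a centerless free group, and then invoking \cite{CG} to manufacture an element $\gamma$ with $\text{tr}\,\tau(\gamma)<0$ whose eigenvalue signs contradict the positivity inherited from the $\rho_i$. The delicacy here is that the sign obstruction requires controlling eigenvalue signs in the limit, which rests on \cite{FG} positivity being a closed condition along the sequence — this is the step where I would be most careful to ensure the convergence $\rho_i(\gamma)\ra\tau(\gamma)$ genuinely pins down the signs.
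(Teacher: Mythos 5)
Your proposal is correct and follows essentially the same route as the paper: conjugating so the basepoint lies in $\text{Min}_{\rho_i}$, extracting a discrete faithful limit $\tau$, excluding the parabolic-subgroup case via the Iwasawa decomposition and the non-singular/singular flat dichotomy (with the Choi--Goldman sign obstruction against Fock--Goncharov positivity), passing to the Hausdorff limit of the convex domains, and finally ruling out parabolic and quasi-hyperbolic boundary degenerations by the quasi-geodesic and antipodality failures. No substantive differences to report.
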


\subsection{Boundary of $\cal C$ in character variety}
We saw in the previous section that $\cal A$ contains a component $\cal C$ consisting of convex real projective structures on $\Sigma$ with hyperbolic boundaries. But $\cal A$ is just a subset of the character variety $\chi_n$ of free group with $n$ generators in $SL(3,\R)$.
In this section, we want to study $\partial \cal A$ in $\chi_n$. By a theorem of Bonahon-Dreyer \cite{BD}, 
$\cal C$ is a convex polytope in Euclidean space.

\begin{theorem}The  boundary of $\cal C$ consists of holonomies of real convex 
structures on $\Sigma$ with either quasi-hyperbolic boundary or parabolic boundary. Each quasi-geodesic boundary determines a codimension 1 boundary in the polytope, and each parabolic boundary determines
a codimension 2 boundary in the polytope.
\end{theorem}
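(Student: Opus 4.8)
\emph{Proof plan.} The strategy is to combine the convergence analysis of the previous theorem with the affine-linear structure of the Bonahon--Dreyer coordinates. Recall from \cite{BD} that $\cal C$ is the interior of a convex polytope, cut out by finitely many linear inequalities in the shear invariants $\sigma_i(\lambda)$ and the triangle invariants $\tau_{111}(T_j)$. The key structural input is that for each boundary component, say with holonomy $A$, the two eigenvalue parameters $\ell_1(A)=\log\lambda_1-\log\lambda_2$ and $\ell_2(A)=\log\lambda_2-\log\lambda_3$ are \emph{affine-linear} functions of these coordinates (they are the appropriate sums of shear invariants along the boundary leaf, corrected by triangle terms), and the hyperbolicity requirement $\ell_1(A)>0$, $\ell_2(A)>0$ is precisely what contributes two of the facet-defining inequalities of the polytope. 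These two functionals are linearly independent, since $\ell_1$ and $\ell_2$ weight $\sigma_1$ and $\sigma_2$ differently. The plan is to identify the finite boundary faces of the polytope with the vanishing loci of $\ell_1$ and $\ell_2$, and to match this with the degeneration type of $A$.

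First I would describe $\partial\cal C$ as a set of representations. Let $[\rho_i]\to[\rho]$ with $\rho_i\in\cal C$ and $[\rho]$ a finite point of the polytope lying in $\partial\cal C$. Exactly as in the proof of the closedness theorem, the convex domains $\Omega_i$ admit a Hausdorff limit $\Omega$ which is properly convex with $\Omega/\rho(\pi_1(\Sigma))$ a convex projective surface, and the Fock--Goncharov positivity \cite{FG} passes to the limit, so every boundary holonomy has positive real eigenvalues of product $1$. Consequently a boundary element can leave the hyperbolic locus only through a collision of eigenvalues: either two eigenvalues coincide, giving $\ell_1(A)=0$ or $\ell_2(A)=0$ with the remaining parameter positive, or all three coincide, giving $\ell_1(A)=\ell_2(A)=0$.

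Next I would pin down the Jordan type of the degenerate boundary element using the geometry of $\Omega$. A repeated eigenvalue could a priori be diagonalizable or carry a Jordan block; but by the description of the limit boundary geodesic in \cite{Marquis}, an axis of a degenerate boundary isometry has the tangent line at its repelling fixed point containing the axis while the tangent line at the attracting fixed point is transverse, which is incompatible with a two-dimensional eigenspace. Hence a single collision produces a Jordan block, i.e. $A$ is quasi-hyperbolic, and a triple collision produces a unipotent, i.e. $A$ is parabolic. This identifies the two degeneration types with $\{\ell_1=0,\ \ell_2>0\}\cup\{\ell_1>0,\ \ell_2=0\}$ and $\{\ell_1=\ell_2=0\}$ respectively.

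Finally the codimension count is immediate from the linear structure. A quasi-hyperbolic boundary forces exactly one of the two independent inequalities $\ell_i(A)\ge 0$ to become an equality, cutting out a codimension one face (a facet) of the polytope; a parabolic boundary forces both, cutting out the intersection of the two facets, which is codimension two because $\ell_1(A)$ and $\ell_2(A)$ are independent functionals. Conversely every finite boundary point of the polytope lies on such a facet, so $\partial\cal C$ consists exactly of the quasi-hyperbolic (codimension one) and parabolic (codimension two) strata. The main obstacle is the middle step: ruling out the diagonalizable repeated-eigenvalue case and confirming the Jordan block, which genuinely needs the asymmetric tangent-line behavior on $\partial\Omega$ from \cite{Marquis} rather than pure linear algebra, together with verifying that $\ell_1$ and $\ell_2$ remain independent as functionals on the coordinate space so that the parabolic stratum is honestly codimension two.
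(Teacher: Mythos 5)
Your argument for the first assertion---that a finite boundary point of the polytope is the holonomy of a properly convex structure whose degenerate boundary components are quasi-hyperbolic or parabolic---follows the paper's route: rerun the Hausdorff-limit/properly-convex argument from the closedness theorem, use positivity to keep the eigenvalues positive real, and invoke Marquis's classification to pin down the Jordan type. One small step you elide and the paper does not: before any of this one must know that a finite point of the coordinate polytope is actually realized by a representation (i.e.\ that the sequence does not escape to infinity in the character variety); the paper gets this by observing that finite Bonahon--Dreyer coordinates bound the traces of the pants curves, which generate the traces of all curves. You should add that step rather than simply writing ``$[\rho]$ a finite point of the polytope lying in $\partial\mathcal C$.''

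For the codimension claims your route genuinely differs from the paper's. You argue structurally: the two boundary length functionals $\ell_1(A),\ell_2(A)$ are affine-linear in the Bonahon--Dreyer coordinates, their positivity gives two independent facet-defining inequalities, and the quasi-hyperbolic (resp.\ parabolic) stratum is where one (resp.\ both) vanish. The paper instead defers entirely to the explicit computations of Section~\ref{degeneration}: it translates Goldman's parameters into Bonahon--Dreyer coordinates and exhibits, for each degeneration, the exact linear relations that appear (e.g.\ $\sigma_1(B_2)+\sigma_2(B_3)=0$ in the quasi-hyperbolic case, and the additional relations in the parabolic case), from which the dimension drop of $1$ or $2$ is read off. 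Your approach is cleaner and more general, but it shifts the burden onto two claims you only assert: that the closed-leaf inequalities of \cite{BD} for a boundary leaf are exactly $\ell_1>0$ and $\ell_2>0$ as independent affine functionals, and that vanishing of one of them at a boundary point of $\mathcal C$ forces a Jordan block rather than a diagonalizable repeated eigenvalue. The second point you do justify via the tangent-line asymmetry of \cite{Marquis}, which is adequate; the first needs an explicit citation or computation (the paper's Section~\ref{degeneration} is, in effect, that computation). With those two points filled in, your proof is complete and arguably more conceptual than the paper's.
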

\begin{proof}Suppose $[\rho_i] \in \cal C$ converge to a point $x$ in the finite boundary of the polytope. Then corresponding Fock-Goncharov coordinates are all finite for $x$. Then it is easy to see that all the traces
of boundary curves of  pairs of pants in pants decomposition of $\Sigma$ are bounded. Since the trace of any other curve
can be generated by the traces of these pants curve, $x$ should correspond to a finite point in the character variety, i.e.
$x$ is represented by a representation $[\rho]$.

The proof goes in  the exactly same way as in the previous section. One can show that, after conjugating $\rho_i$, $\rho_i\ra\tau$ and $\tau(\pi_1(\Sigma))$  is not contained in a parabolic subgroup. Furthermore $\Omega_i\ra\Omega$ in Hausdorff topology and $\Omega/\tau(\pi_1(\Sigma))$ is properly convex.
This shows that $[\rho]=[\tau]$ is represented by a holonomy of a real convex structure. Hence the boundary curve should be either quasi-hyperbolic or parabolic.
\end{proof}
As will be described in Theorem \ref{allparabolic},  the space of  convex real projective structures with all three boundary components of $P$ being parabolic is parametrized by $(\sigma_1(B_1), \tau_{111}(T_+))$. Hence this codimension 6 space is homeomorphic to $\R^2$.

The space of convex real projective structures on $T$ with the boundary curve being parabolic is
parametrized by six parameters $\sigma_1(B_i), i=1,2,3, \tau_{111}(T_+), \sigma_1(C),\sigma_2(C)$ where $C$ is a meridian curve so that $T\setminus C$ is a pair of pants. This space forms a codimension 2
boundary of $\cal T$ in $\chi_2$. Hence this codimension 2 boundary set is homeomorphic to $\R^6$.
\section{Area and degeneration}\label{area}

In this section, we are interested in the relationship between the Hilbert area and the degeneration of convex structres.  It is expected  if the convex structure diverges away from Teichm\"uller space, then the Hilbert area tends to infinity. Here are some supporting evidences.
\begin{theorem} Suppose the Hilbert area of a 
strictly convex real projective structure is  bounded. Then all the triangle invariants are  bounded in terms of the area.
\end{theorem}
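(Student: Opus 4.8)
The plan is to establish the contrapositive contact: rather than proving directly that bounded area controls all triangle invariants, I would show that if some triangle invariant $\tau_{111}(T_j)$ diverges, then the Hilbert area must blow up, and then conclude the stated quantitative bound by examining the rate of this blow-up. The starting point is the observation that the Hilbert area of the convex projective surface decomposes as a sum of the Hilbert areas of the lifted ideal triangles $\tilde{T}_j$ in the fundamental domain, so it suffices to understand how the Hilbert area of a single ideal triangle depends on its triangle invariant. Concretely, I would fix normalized coordinates for the three flags $\mathcal{F}_\rho(\tilde v_j), \mathcal{F}_\rho(\tilde v_j'), \mathcal{F}_\rho(\tilde v_j'')$ and express $T(E,F,G)$, hence $\tau_{111} = \log T(E,F,G)$, explicitly in terms of the positions of the osculating conic or the tangent flags at the three ideal vertices.

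First I would recall that the triangle invariant measures, projectively, how the three tangent lines at the ideal vertices are configured relative to the triangle; it is a genuine projective invariant of the triple of flags. I would then set up a one-parameter family of ideal triangles in which $\tau_{111}$ is driven to $+\infty$ (or $-\infty$) while keeping the three ideal vertices fixed, which geometrically forces the boundary curve $\partial\Omega$ to bulge outward or flatten dramatically near one of the edges. The key computation is to obtain a lower bound for the Hilbert area of the triangle as an increasing, unbounded function of $|\tau_{111}|$. For this I expect to use the integral formula for Hilbert area together with the fact that, in the normalized frame, increasing $\tau_{111}$ pushes $\partial\Omega$ closer to a pair of supporting lines meeting at a vertex, so that the cross-ratio defining the Hilbert distance between interior points degenerates, inflating the area. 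The comparison with the hyperbolic (conic) case, where the area is the fixed constant $\pi$, gives the baseline against which the divergence is measured.

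The hard part will be making the area lower bound both rigorous and quantitative, i.e. extracting an explicit increasing function $f$ with Hilbert area $\geq f(|\tau_{111}|)$ and $f \to \infty$, rather than a mere qualitative divergence statement. The difficulty is that the convex domain $\Omega$ is not determined by a single triangle alone—the global convexity constraint couples neighboring triangles—so I must argue that the local configuration of tangent flags forcing large $\tau_{111}$ cannot be compensated by the rest of $\Omega$ in a way that keeps the area small. I would handle this by isolating a definite sub-region of $\Omega$ (for instance a projective sub-triangle or a horoball-like region near the relevant vertex) whose area alone is controlled from below by $|\tau_{111}|$, using monotonicity of Hilbert area under inclusion of convex sets. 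Once such a monotone lower bound is secured for each ideal triangle, summing over the finitely many triangles in the ideal triangulation and inverting the inequality yields the desired conclusion: bounded area forces every $\tau_{111}(T_j)$ to lie in a compact interval whose size is governed by the area bound.

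Finally, I would note the parallel role played by the shear coordinates. Although the statement concerns only triangle invariants, the same mechanism—via Proposition \ref{bulging} and the bulging deformation, under which the difference $\sigma_2 - \sigma_1$ measures $6v$—shows that divergence of shear differences likewise inflates the area; this is consistent with, and complementary to, the present theorem, and I would use it as a sanity check that the area estimate is capturing the full degeneration of the Bonahon-Dreyer parameters and not merely the triangle part.
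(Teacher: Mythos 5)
Your proposal is correct and follows essentially the same route as the paper: restrict attention to one ideal triangle $\triangle$, replace $\Omega$ by the circumscribed triangle $T$ cut out by the tangent lines at its three ideal vertices (so that $\mathrm{Area}_T(\triangle)\leq \mathrm{Area}_\Omega(\triangle)\leq \mathrm{Area}(\Omega)$ by monotonicity of the Hilbert metric under inclusion, and $T$ depends only on the flag triple), normalize the configuration to a one-parameter family in which the triangle invariant is an explicit function ($\tfrac{1-\alpha}{\alpha}$ in the paper), and invoke the divergence of the area of the inscribed ideal triangle as the configuration degenerates, which the paper takes from Colbois--Vernicos--Verovic rather than recomputing. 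The one point to sharpen in your write-up is the direction of the comparison: the domain determined by the flag data is a \emph{super}-domain of $\Omega$ (the tangent-line triangle), not a sub-region, and it is precisely this replacement that decouples the estimate from the rest of $\partial\Omega$ and resolves the coupling difficulty you flag.
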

\begin{proof}Let $\triangle$ be a lift of an ideal triangle in $\Omega$. Then the tangent lines at the vertices of
$\triangle$  to $\partial \Omega$ form a triangle $T$. Then
$$Area_{T}(\triangle)\leq Area_{\Omega}(\triangle)<R.$$
Up to affine transformation we can put $(T,\triangle)$ into the standard positions such that
the vertices of $T$ are $(0,1,1),(0,0,1),(1,0,1)$ and the vertices of $\triangle$ are
$(0,1/2,1),(1/2,1/2,1),(\alpha,0,1)$ where $0<\alpha\leq 1/2$. Then the triangle invariant $T(\cal E,\cal F,\cal G)$ of three flags
$$\cal E:\langle (0,1/2,1) \rangle \subset \langle (0,0,1), (0,1,1) \rangle $$
$$\cal F:\langle (1/2,1/2,1) \rangle \subset \langle (0,1,1), (1,0,1) \rangle $$
$$\cal G:\langle (\alpha,0,1) \rangle \subset \langle (0,0,1), (1,0,1) \rangle $$ is
$$\frac{1-\alpha}{\alpha}.$$
As $\alpha\ra 0$, $Area_{T}(\triangle(\alpha))\ra\infty$ (\cite{Col}), hence $\alpha\geq \epsilon= \epsilon(R)$.
This implies that
$$1\leq T(\cal E,\cal F,\cal G)\leq \frac{1-\epsilon}{\epsilon}.$$
\end{proof}
The above theorem also can be deduced from  \cite{AC}.

\begin{proposition}\label{bulging}
We fix a topological data of ideal triangulation of $\Sigma$ coming from pants decomposition to
define Bonahon-Dreyer coordinates. Suppose all triangle invariants are bounded and some shear invariant on an oriented infinite leaf $l$ satisfies that $|\sigma_1(l)-\sigma_2(l)|\ra\infty$. Then the Hilbert area of the projective surface $\Sigma$ defined by the Hilbert metric corresponding to Bonahon-Dreyer coordinates tends to infinity.
\end{proposition}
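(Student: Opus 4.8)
The plan is to show that already one of the two ideal triangles adjacent to $l$ acquires unbounded Hilbert area; since the Hilbert area of $\Sigma$ is the sum of the nonnegative Hilbert areas of the ideal triangles of the fixed triangulation, this alone forces the total area to infinity.

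First I would translate the hypothesis into the bulging picture of the previous subsection. There the difference of shear invariants along $l$ is identified with the bulging parameter through $\sigma_2(l)-\sigma_1(l)=6v$, so $|\sigma_1(l)-\sigma_2(l)|\to\infty$ is the same as $|v|\to\infty$. Reversing the orientation of $l$ interchanges $\sigma_1(l)$ and $\sigma_2(l)$, so after possibly doing this I may assume $v\to+\infty$. Normalizing the two flags at the endpoints of $l$ so that $l(-\infty)=(1,0,0)$, $l(\infty)=(0,0,1)$ and the two tangent lines of $\partial\Omega$ there are $\{z=0\}$ and $\{x=0\}$, their intersection is $l^\perp=(0,1,0)$, and the third vertex $(1,y,x)$ of the triangle $\triangle_v$ to the right of $l$ is carried to $(1,e^{3v}y,x)$. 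Since $y\neq0$ (the vertex is off $l$), this point converges to $l^\perp$ as $v\to+\infty$.

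Next I would replace $\Omega$ by a circumscribing triangle having a corner at $l^\perp$. Choosing a further supporting line $\ell_0$ of $\Omega$ that stays bounded away from $l^\perp$, the three supporting lines $\{z=0\}$, $\{x=0\}$ and $\ell_0$ bound a triangle $\hat T\supseteq\Omega$; since shrinking the domain only increases Hilbert distances, $Area_\Omega(\triangle_v)\ge Area_{\hat T}(\triangle_v)$, exactly as in the inequality $Area_T(\triangle)\le Area_\Omega(\triangle)$ used in the previous theorem. By construction $\hat T$ has a corner at $l^\perp$, the two vertices $l(\pm\infty)$ of $\triangle_v$ sit at fixed projective positions on the edges $\{z=0\}$ and $\{x=0\}$ and stay bounded away from this corner, while the third vertex tends to $l^\perp$.

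Finally I would invoke the area estimate of the previous theorem. As a vertex of $\triangle_v$ approaches the corner $l^\perp$ of $\hat T$ with the other two vertices fixed, the pair $(\hat T,\triangle_v)$ degenerates in exactly the manner of the configuration $(T,\triangle(\alpha))$ with $\alpha\to0$ appearing there, so that $Area_{\hat T}(\triangle_v)\to\infty$ by the computation cited from \cite{Col}; together with the previous inequality this yields $Area_\Omega(\triangle_v)\to\infty$, and hence the total Hilbert area diverges. I expect the main obstacle to be precisely this last step: one must verify that the blow-up ``a vertex of the inner triangle tends to a vertex of the circumscribing triangle'' genuinely occurs here—that $l^\perp$ is a true corner of $\hat T$ and that the two fixed vertices stay bounded away from it—and that $\hat T$ can be chosen uniformly along the family so as not to degenerate itself. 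I note in particular that the tangent triangle at the three vertices of $\triangle_v$ cannot be used for this, since its Hilbert area depends only on the (bounded) triangle invariant; one really must exploit that $\Omega$ is pinched between the two fixed tangent lines near $l^\perp$. Granting the uniform choice of $\hat T$, the quantitative divergence is inherited from the previous theorem and \cite{Col}.
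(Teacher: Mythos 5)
Your proposal follows essentially the same route as the paper's proof: both interpret $|\sigma_1(l)-\sigma_2(l)|\to\infty$ as a bulging parameter $|v|\to\infty$, observe that the ideal triangle adjacent to $l$ on the appropriate side degenerates toward the triangle with vertices $l(\pm\infty)$ and $l^\perp$, and invoke \cite{Col} for the divergence of the Hilbert area. Your circumscribed-triangle comparison $Area_{\Omega}(\triangle_v)\ge Area_{\hat T}(\triangle_v)$ and the reduction to $v\to+\infty$ by reversing the orientation of $l$ (which swaps $\sigma_1$ and $\sigma_2$ and the two adjacent triangles) make explicit what the paper states only as ``the area of such triangles tends to infinity,'' so your write-up is, if anything, a more detailed version of the same argument.
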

\begin{proof} First note that the difference of two shear parameters along a geodesic gives a bulging parameter. Hence if the difference of two shear parameters along $l$ tends to infinity, the part $D$ of the domain $\Omega$ delimited by $l$, and
the ideal triangle contained in $D$ whose one edege is $l$  converge to the triangle whose vertices are the end points $l(\pm\infty)$ of $l$ and the intersection $l^\perp$
of two tangent lines at $l(\pm\infty)$ if $v\ra\infty$. The area of such triangles tends to infinity. If $v\ra -\infty$, then $D$ degenerates to $l$, then the area of the  left ideal triangle tends to infinity. See \cite{Col} to see that the area is infinite for such a triangle. Hence the Hilbert area of one of two triangles sharing the infinite edge $l$ tends to infinity in the
limit domain. 
\end{proof}

On the other hand, even if $\sigma_1+\sigma_2 \ra\infty$ and the triangle invariants are bounded then
the area remains bounded since the sum represents the twisting parameter.

\section{Bonahon-Dreyer coordinates and Degeneration}\label{degeneration}
In this section, we give explicit Bonahon-Dreyer coordinates for the degenerated convex structures on a pair of pants $P$ and a punctured torus $T$.\\
{\bf Notations:} For a pair of pants $P$, three boundary components are $A_1,A_2,A_3$. Three positive eigenvalues of the monodromy of $A_i$ are $0<\lambda_i<\mu_i<\nu_i$. In Goldman parametrization, $\tau_i=\mu_i+\nu_i$. $P$ is decomposed into two triangles $T_\pm$ with edges $B_1,B_2,B_3$ which appear in the clock-wise order along $T_+$. See the Figure 1 in \cite{BK}. These edges are oriented counter-clockwise. The boundary component $A_i$ is opposite to $B_i$. The indices $i$ are modulo 3.
Goldman's internal parameter for a convex projective structure on $P$ are $s,t$ and an explicit formula can be found in \cite{BK}.
\subsection{Explicit formula for $P$}\label{formula}
Let $P$ be a pair of pants. The space of marked convex projective structures on $P$ with geodesic boundary has 8 dimensional parameter, $3\times 2$ boundary parameters, and 2 more internal parameters
$s$ and $t$ in Goldman parameters.
Now suppose we pinch all the boundary curves to cusps. Then all the boundary parameters vanish and
only 2 internal parameters survive in Goldman parameters. This can be justified by Figure 2 in \cite{BK} where verticies of $\Delta_+$ are parabolic fixed points.
This can be seen also by Proposition 4.1 of \cite{BK}. If all the boundary curves are parabolic,
then $\lambda_i=1, \tau_i=2$ where $\lambda_i$ is the smallest eigenvalue and $\tau_i$ is the sum of the rest eigenvalues of boundary curves. Then by the formula of Proposition 4.1 of \cite{BK},
$$\sigma_1(B_i)=\log (s\mu_{i-1}\sqrt{\frac{\lambda_{i-1}\lambda_{i+1}}{\lambda_i}})=\log s$$
$$ \sigma_2(B_i)=\log (\frac{\mu_{i+1}}{s}\sqrt{\frac{\lambda_{i-1}\lambda_{i+1}}{\lambda_i}})=-\log s $$
where $\mu_i=\frac{\tau_i-\sqrt{\tau_i^2-\frac{4}{\lambda_i}}}{2}.$
And $$\tau_{111}(T_+)=\log \frac{(e^{-\sigma_2(B_2)}+1)(e^{-\sigma_2(B_3)}+1)}{t(e^{\sigma_1(B_3)}+1)}=\log\frac{(s+1)}{t}.$$
\begin{proposition}\label{allparabolic}
Suppose $P$ is equipped with a convex projective structure with all three boundary components  being parabolic. Then two numbers $(\sigma_1(B_1), \tau_{111}(T_+))$ parametrize the space of such convex projective structures on $P$ with all three boundary components being parabolic.
\end{proposition}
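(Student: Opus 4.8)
The plan is to reduce the claim to the explicit formulas recorded just above the statement and then to invert them. First I would recall that, by Goldman's theorem, a convex projective structure on $P$ is determined by the three pairs of boundary parameters $(\lambda(A_i),\tau(A_i))$ together with the two internal parameters $(s,t)$. When every boundary component is parabolic the boundary data are no longer free: each $A_i$ has $\lambda_i=1$ and $\tau_i=2$, so the six boundary parameters are all pinned to fixed values. Consequently the moduli space of convex projective structures on $P$ with all three boundaries parabolic is cut out inside Goldman's parameter space as the locus where the boundary data take these fixed values, and is therefore parametrized precisely by the surviving internal parameters $(s,t)$, which range over $(0,\infty)^2$. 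This is the content of Figure 2 and Proposition 4.1 of \cite{BK}.

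Next I would substitute the parabolic values $\lambda_i=1$, $\tau_i=2$, hence $\mu_i=1$ and $\sqrt{\lambda_{i-1}\lambda_{i+1}/\lambda_i}=1$, into the Bonahon--Dreyer formulas to obtain the two displayed identities
$$\sigma_1(B_1)=\log s, \qquad \tau_{111}(T_+)=\log\frac{s+1}{t}.$$
These are exactly the computations carried out in Subsection \ref{formula}; no new work is needed beyond observing that $\sigma_1(B_1)$ singles out $s$ and that $\tau_{111}(T_+)$ then pins down $t$.

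Finally I would show that the assignment $(s,t)\mapsto(\sigma_1(B_1),\tau_{111}(T_+))$ is a homeomorphism onto $\R^2$ by writing down its inverse explicitly:
$$s=e^{\sigma_1(B_1)}, \qquad t=\bigl(e^{\sigma_1(B_1)}+1\bigr)\,e^{-\tau_{111}(T_+)}.$$
Both the map and its inverse are smooth, and as $(s,t)$ ranges over $(0,\infty)^2$ the pair $(\sigma_1(B_1),\tau_{111}(T_+))$ ranges over all of $\R^2$: indeed $\log s$ covers $\R$, and for each fixed $s$ the quantity $\log\frac{s+1}{t}$ covers $\R$ as $t$ runs through $(0,\infty)$. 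Hence $(\sigma_1(B_1),\tau_{111}(T_+))$ is a global coordinate system on the space of such structures.

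The main obstacle is not the algebra, which is immediate, but justifying the first step: one must be sure that once the boundaries are made parabolic the internal pair $(s,t)$ still varies freely over the full Goldman domain and that no further collapse occurs in the limit. I would address this by appealing to the explicit degeneration of the ideal triangulation---the vertices of $\Delta_+$ becoming parabolic fixed points---described in \cite{BK}, which guarantees that the two internal parameters persist as genuine moduli and that the substitution leading to the formulas above remains valid at the parabolic locus.
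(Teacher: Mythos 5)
Your proposal is correct and follows essentially the same route as the paper: the paper also substitutes the parabolic values $\lambda_i=1$, $\tau_i=2$ (hence $\mu_i=1$) into Proposition 4.1 of \cite{BK} to get $\sigma_1(B_i)=\log s$, $\sigma_2(B_i)=-\log s$, and $\tau_{111}(T_+)=\log\frac{s+1}{t}$, and reads off the parametrization by the surviving internal parameters $(s,t)$. Your explicit inversion formulas are a harmless (and slightly more complete) addition to what the paper leaves implicit.
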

  Similarly if one of the 3 boundary curves is pinched, then corresponding Bonahon-Dreyer
coordinates can be calculated as follows.
\begin{proposition}\label{oneparabolic}
Suppose $P$ is equipped with a convex projective structure with one boundary component $A_1$ being parabolic. Then Bonahon-Dreyer coordinates  $\sigma_1(B_1),\sigma_2(B_1), \sigma_1(B_2), \sigma_1(B_3), \tau_{111}(T^+),\tau_{111}(T^-)$ are  independent parameters to describe the space of
convex projective structures on $P$ with $A_1$ being parabolic.
\end{proposition}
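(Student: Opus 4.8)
The plan is to mimic the computation preceding Proposition \ref{allparabolic}: express each of the six coordinates explicitly as a function of Goldman's parameters through Proposition 4.1 of \cite{BK}, impose parabolicity of $A_1$, and show the resulting map is a local diffeomorphism onto an open set. Pinching $A_1$ to a parabolic forces $\lambda_1=\mu_1=\nu_1=1$, hence $\tau_1=2$; this removes the two boundary parameters of $A_1$ and leaves the six Goldman parameters $(s,t,\lambda_2,\mu_2,\lambda_3,\mu_3)$, ranging over the open region cut out by $\lambda_i<\mu_i$ and $\lambda_i\mu_i^2<1$ for $i=2,3$. It therefore suffices to prove that the map $\Phi$ sending these six parameters to
\[
\big(\sigma_1(B_1),\sigma_2(B_1),\sigma_1(B_2),\sigma_1(B_3),\tau_{111}(T_+),\tau_{111}(T_-)\big)
\]
has nonsingular Jacobian throughout this region.

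First I would treat the four shear coordinates. Setting $\lambda_1=\mu_1=1$ in the formulas of \cite{BK} gives
\[
\sigma_1(B_1)=\log\!\big(s\,\mu_3\sqrt{\lambda_2\lambda_3}\big),\qquad
\sigma_2(B_1)=\log\!\big(\tfrac{\mu_2}{s}\sqrt{\lambda_2\lambda_3}\big),
\]
\[
\sigma_1(B_2)=\log\!\big(s\sqrt{\lambda_3/\lambda_2}\big),\qquad
\sigma_1(B_3)=\log\!\big(s\,\mu_2\sqrt{\lambda_2/\lambda_3}\big),
\]
none of which involves the twist $t$. In the logarithmic variables $(\log s,\log\lambda_2,\log\mu_2,\log\lambda_3,\log\mu_3)$ these four functions are affine-linear, and the $4\times4$ minor of their coefficient matrix obtained by deleting the $\log\lambda_3$ column equals $\pm1$. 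Hence the four shears have rank $4$, and they determine $s,\lambda_2,\mu_2,\mu_3$ once $\lambda_3$ is prescribed; equivalently, the kernel of the $4\times5$ coefficient matrix is the single line spanned by $v=(\tfrac12,2,-1,1,-2)$, the unique infinitesimal direction (with nonzero $\log\lambda_3$-component) along which all four shears are constant.

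It remains to recover $\lambda_3$ and $t$ from the two triangle invariants. Since the shear block is insensitive both to the twist direction $\partial_{\log t}$ and to the kernel direction $v$, the determinant of the full $6\times6$ Jacobian of $\Phi$ factors, up to sign, as the nonzero shear minor times
\[
\det\begin{pmatrix}
\partial_v\tau_{111}(T_+) & \partial_{\log t}\tau_{111}(T_+)\\
\partial_v\tau_{111}(T_-) & \partial_{\log t}\tau_{111}(T_-)
\end{pmatrix}.
\]
From $\tau_{111}(T_+)=\log\frac{(e^{-\sigma_2(B_2)}+1)(e^{-\sigma_2(B_3)}+1)}{t\,(e^{\sigma_1(B_3)}+1)}$ one reads off $\partial_{\log t}\tau_{111}(T_+)=-1$, and the analogous expression for $\tau_{111}(T_-)$ from Proposition 4.1 of \cite{BK} supplies its twist-derivative after substituting the shear-inverted values of $s,\lambda_2,\mu_2,\mu_3$.

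The hard part will be verifying that this $2\times2$ determinant never vanishes on the Goldman region. I expect the two triangle invariants to depend on the twist with opposite sign, so that the degenerate locus $\partial_v\tau_{111}(T_+)\,\partial_{\log t}\tau_{111}(T_-)=\partial_v\tau_{111}(T_-)\,\partial_{\log t}\tau_{111}(T_+)$ is empty; confirming this is the one genuinely delicate computation, and it is the step most likely to require a careful analysis of the explicit formulas rather than a formal argument. Once both minors are shown nonvanishing, $\Phi$ is a local diffeomorphism; since the inversion constructed above --- the shears give $s,\lambda_2,\mu_2,\mu_3$ from $\lambda_3$, and the triangle invariants then give $\lambda_3$ and $t$ --- is explicit and global, $\Phi$ is a diffeomorphism onto its image, so the six listed coordinates are independent parameters for the space of convex projective structures on $P$ with $A_1$ parabolic.
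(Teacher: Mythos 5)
Your overall strategy is sound and your shear-block computation checks out (the $4\times4$ minor obtained by deleting the $\log\lambda_3$ column is indeed $\pm1$, and $v=(\tfrac12,2,-1,1,-2)$ does span the kernel of the $4\times5$ shear block), but the proof as written stops at exactly the step you flag: you never verify that the $2\times2$ block built from the triangle invariants is nonsingular. Your heuristic --- that the two triangle invariants depend on the twist with opposite signs --- does not by itself force the determinant $\partial_v\tau_{111}(T_+)\,\partial_{\log t}\tau_{111}(T_-)-\partial_v\tau_{111}(T_-)\,\partial_{\log t}\tau_{111}(T_+)$ to be nonzero; it could still vanish if $\partial_v\tau_{111}(T_+)=-\partial_v\tau_{111}(T_-)$. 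So there is a genuine gap. Fortunately it closes in one line, and the computation is far less delicate than you anticipate: the formulas quoted from Proposition 4.1 of \cite{BK} give $\tau_{111}(T_+)+\tau_{111}(T_-)=\log(\mu_1\mu_2\mu_3)=\log(\mu_2\mu_3)$ since $\mu_1=1$. Hence $\partial_{\log t}\tau_{111}(T_+)=-1$ and $\partial_{\log t}\tau_{111}(T_-)=+1$, so the $2\times2$ determinant equals $\partial_v\bigl(\tau_{111}(T_+)+\tau_{111}(T_-)\bigr)=\partial_v\log(\mu_2\mu_3)=-1-2=-3\neq 0$ identically on the Goldman region. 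With that inserted, your argument is complete, and the explicit inversion you describe (shears determine $s,\lambda_2,\mu_2,\mu_3$ as functions of $\lambda_3$; then $\tau_{111}(T_+)+\tau_{111}(T_-)$ is strictly monotone in $\lambda_3$ along each such fiber, after which $t$ is read off from $\tau_{111}(T_+)$) upgrades the local statement to a global one.

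For comparison, the paper's own proof takes a different and much terser route: it does not compute a Jacobian, but instead exhibits the relations among the eight Bonahon--Dreyer coordinates created by pinching $A_1$, namely $\sigma_1(B_2)=-\sigma_2(B_3)$ together with $\sigma_1(B_1)-\sigma_2(B_1)=\log(s^2\mu_3/\mu_2)$ and $\sigma_1(B_3)-\sigma_2(B_2)=\log(s^2\mu_2/\mu_3)$, and concludes by a dimension count that six coordinates remain independent. Your approach is more systematic and, once the $2\times2$ determinant is evaluated as above, actually proves the stronger assertion that the six listed coordinates give a global parametrization rather than merely having the right count, so it is worth carrying through.
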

\begin{proof}The holonomy of $A_1$ is parabolic, hence $\lambda_1=1,\tau_1=2$ and $\mu_1=1$.
Then 
$$ \sigma_1(B_2)=\log (s\sqrt{\frac{\lambda_{3}}{\lambda_2}})=  -\sigma_2(B_3)=-\log (\frac{1}{s}\sqrt{\frac{\lambda_{2}}{\lambda_3}}) .$$

It can be checked that
$$\sigma_1(B_1)-\sigma_2(B_1)=\log(s^2\frac{\mu_3}{\mu_2}),$$
$$\sigma_1(B_3)-\sigma_2(B_2)=\log(s^2\frac{\mu_2}{\mu_3}).$$
This gives
$$\sigma_1(B_1)-\sigma_2(B_1)+\sigma_1(B_3)-\sigma_2(B_2)=\log(s^4).$$
Hence there are 6 independent parameters.
\end{proof}

The codimension 1 boundary of $\cal P$ is the set of properly convex projective structures on $P$ with
one boundary being quasi-hyperbolic.
Recall that a quasi-hyperbolic isometry is a matrix in $SL(3,\R)$ which is conjugate to
$\begin{pmatrix}
 \mu & 1 & 0 \\
  0     & \mu & 0 \\
0 & 0 &  \nu\end{pmatrix}$ where $\mu>\nu>0, \mu^2\nu=1$.

Since quasi-hyperbolic elements are parametrized by one parameter only, if one of the boundary of $P$ is quasi-hyperbolic, such a projective structures is parametrized by 7 parameters in total.

\begin{proposition}\label{quasi}Suppose $P$ is equipped with a convex projective structure with one boundary component $A_1$ being quasi-hyperbolic. Then Bonahon-Dreyer coordinates   $$\sigma_1(B_1),\sigma_2(B_1),\sigma_1(B_2),\sigma_2(B_2),\sigma_1(B_3),\tau_{111}(T_\pm)$$ are  independent parameters to describe the space of
convex projective structures on $P$ with $A_1$ being quasi-hyperbolic.
\end{proposition}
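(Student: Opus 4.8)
The plan is to realize the quasi-hyperbolic locus as a single codimension-one face of the Bonahon--Dreyer polytope $\overline{\cal P}$ and to read off the independent coordinates from its defining equation, carrying exactly one relation fewer than in the parabolic Proposition \ref{oneparabolic}. First I would isolate, directly from the Proposition 4.1 formulas of \cite{BK}, the two shear coordinates that record the eigenvalues of $A_1$,
$$\sigma_1(B_2)=\log\Big(s\mu_1\sqrt{\tfrac{\lambda_1\lambda_3}{\lambda_2}}\Big),\qquad \sigma_2(B_3)=\log\Big(\tfrac{\mu_1}{s}\sqrt{\tfrac{\lambda_1\lambda_2}{\lambda_3}}\Big),$$
so that all dependence on $s$, $\lambda_2$, $\lambda_3$ cancels in the sum and
$$\sigma_1(B_2)+\sigma_2(B_3)=\log\big(\mu_1^{\,2}\lambda_1\big),$$
an expression in the data of the single boundary $A_1$.

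Next I would translate the quasi-hyperbolic condition into this coordinate. For a hyperbolic $A_1$ the eigenvalues satisfy $\lambda_1<\mu_1<\nu_1$ with $\lambda_1\mu_1\nu_1=1$, whence $\mu_1^2\lambda_1<\mu_1\nu_1\lambda_1=1$; the element $A_1$ degenerates to quasi-hyperbolic precisely when its top two eigenvalues coalesce, $\mu_1=\nu_1$, i.e. $\mu_1^2\lambda_1=1$. Hence the quasi-hyperbolic stratum for $A_1$ is cut out inside $\overline{\cal P}$ by the single affine relation
$$\sigma_1(B_2)+\sigma_2(B_3)=0,$$
which is exactly the supporting hyperplane of the codimension-one face produced in the boundary theorem of Section \ref{moduli}. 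This is one relation, as opposed to the two relations forced by $\lambda_1=\mu_1=1$ in the parabolic case, and it accounts for the count $8-1=7$.

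Finally I would invoke Bonahon--Dreyer: the eight coordinates $\{\sigma_1(B_i),\sigma_2(B_i),\tau_{111}(T_\pm)\}$ form a real-analytic coordinate system on $\cal P$ and extend continuously to $\overline{\cal P}$. On the face $\{\sigma_1(B_2)+\sigma_2(B_3)=0\}$ the coordinate $\sigma_2(B_3)=-\sigma_1(B_2)$ is determined by the remaining ones, so
$$\sigma_1(B_1),\ \sigma_2(B_1),\ \sigma_1(B_2),\ \sigma_2(B_2),\ \sigma_1(B_3),\ \tau_{111}(T_+),\ \tau_{111}(T_-)$$
restrict to an independent set of parameters, which is the assertion; this is why $\sigma_2(B_3)$, and not $\sigma_2(B_2)$, is the coordinate dropped relative to the full list.

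The main obstacle is the middle step: verifying that the eigenvalue-coincidence $\mu_1=\nu_1$ is captured by exactly the one linear relation $\sigma_1(B_2)+\sigma_2(B_3)=0$ and by no further hidden relation, which rests on the precise \cite{BK} formulas and on the real-analytic extension of the chart across the face. The delicate point underneath is that the five retained shears are \emph{degenerate} along the simultaneous boundary-scaling direction $\log\lambda_i\mapsto\log\lambda_i-2\delta,\ \log\mu_i\mapsto\log\mu_i+\delta$ (which keeps $\mu_1=\nu_1$), so the two triangle invariants $\tau_{111}(T_\pm)$ must supply both this direction and the twist direction $\log t$; confirming that they do is the crux. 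A direct alternative, mirroring Proposition \ref{oneparabolic}, is to substitute $\lambda_1=\mu_1^{-2}$ into all eight \cite{BK} formulas and check that the $7\times7$ Jacobian in $(\log\mu_1,\log\lambda_2,\log\mu_2,\log\lambda_3,\log\mu_3,\log s,\log t)$ is nonsingular; this is elementary but needs the explicit derivatives of $\tau_{111}(T_\pm)$, which the face argument sidesteps.
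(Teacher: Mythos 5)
Your proposal is correct and follows essentially the same route as the paper: both use the Proposition 4.1 formulas of \cite{BK} to show that the quasi-hyperbolic condition $\mu^2\nu=1$ forces the single relation $\sigma_1(B_2)+\sigma_2(B_3)=\log(\mu_1^2\lambda_1)=0$, so that $\sigma_2(B_3)$ is redundant and the remaining seven coordinates are independent. The extra framing via the face of the Bonahon--Dreyer polytope and the Jacobian remark are embellishments on the same computation.
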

\begin{proof} The holonomy of $A_1$ is quasi-hyperbolic, hence $\lambda_1=\nu, \tau_1=2\mu$
and $\mu_1=\mu$.   Then
$$\sigma_1(B_2)=\log(s\mu \sqrt{\frac{\nu\lambda_3}{\lambda_2}})$$
$$\sigma_2(B_3)=\log(\frac{\mu}{s}\sqrt{\frac{\nu\lambda_2}{\lambda_3}}).$$
Hence
$$\sigma_1(B_2)+\sigma_2(B_3)=2\log(\mu\sqrt{\nu})=0$$ since $\mu^2\nu=1$.
Then $\sigma_1(B_1),\sigma_2(B_1),\sigma_1(B_2),\sigma_2(B_2),\sigma_1(B_3),\tau_{111}(T_\pm)$ are  parameters.
\end{proof}

\subsection{Explicit formula for $T$}
In this section, we describe  explicit parameters to describe the convex real projective structures on a punctured torus $T$ whose boundary curve is $B$. Choose a meridian curve $C$ so that $T\setminus C$ is a pair of pants.  In this case, Goldman parameters are $\lambda(C),\tau(C),\lambda(B),\tau(B)$, two gluing parameters $(u,v)$ along $C$ together with two internal parameters $(s,t)$.
By Proposition 5.2 in \cite{BK}, Bonahon-Dreyer shear coordinates $\sigma_1(C)$ and $\sigma_2(C)$ are given by
$$ \sigma_1(C)=(u-u^0)-3 (v-v^0), \sigma_2(C)=(u-u^0)+ 3(v-v^0),$$ once a starting point $(u^0,v^0)$ is specified in Goldman's parameter. We normalize the coordinates that $(u^0,v^0)=(0,0)$.

Set $B=A_1, C=A_2=A_3$.
When $B$ becomes parabolic, $\lambda(B)=1,\tau(B)=2, \mu(B)=1$. Then by the proof of Theorem \ref{oneparabolic}, we get
$$\sigma_1(B_2)=-\sigma_2(B_3)$$ and
$$\sigma_1(B_1)-\sigma_2(B_1)=\log(s^2\frac{\mu_3}{\mu_2}),$$
$$\sigma_1(B_3)-\sigma_2(B_2)=\log(s^2\frac{\mu_2}{\mu_3}).$$ Furthermore
$$\sigma_1(B_1)-\sigma_2(B_1)+\sigma_1(B_3)-\sigma_2(B_2)=\log(s^4).$$
Hence 
 $\sigma_1(B_1),\sigma_2(B_1), \sigma_1(B_2), \sigma_1(B_3), \tau_{111}(T_+),\tau_{111}(T_-)$ are  independent parameters to describe  projective structures on a pair of pants when $B$ is parabolic.
But in our case, since $B=A_1, C=A_2=A_3$,  $\lambda_2=\lambda_3, \mu_2=\mu_3$.
Hence 
$$\sigma_1(B_2)=\log s=-\sigma_2(B_3), \sigma_1(B_1)-\sigma_2(B_1)=2\log s= \sigma_1(B_3)-\sigma_2(B_2).$$
Therefore if $\sigma_1(B_1), \sigma_1(B_2), \sigma_1(B_3)$ are given, then the others $$\sigma_2(B_3),\sigma_2(B_1)=\log(\frac{\mu_2\lambda_2}{s}), \sigma_2(B_2)=\log(\frac{\mu_2}{s})$$ are known.
Hence $s, \lambda_2=\lambda_3,\mu_2=\mu_3$ can be deduced from $$\sigma_1(B_1), \sigma_1(B_2), \sigma_1(B_3).$$

By the formula in Proposition 4.1 of \cite{BK},
$$ \tau_{111}(T_+)=\log \frac{(e^{-\sigma_2(B_2)}+1)(e^{-\sigma_2(B_3)}+1)}{t(e^{\sigma_1(B_3)}+1)}  $$
$$ \tau_{111}(T_-)=\log \frac{t\mu_1\mu_2\mu_3(e^{\sigma_1(B_3)}+1)}{(e^{-\sigma_2(B_2)}+1)(e^{-\sigma_2(B_3)}+1)}     .$$
Hence if $\sigma_1(B_1), \sigma_1(B_2), \sigma_1(B_3), \tau_{111}(T_+)$ are given, then $\tau_{111}(T_-)$ is known.
 Therefore,  $\sigma_1(B_1), \sigma_1(B_2), \sigma_1(B_3), \tau_{111}(T_+)$ parametrize such a structure on a pair of pants. When the pair of pants is glued along $C$, then two more shear parameters
$\sigma_1(C)=u-3v, \sigma_2(C)=u+3v$ are involved.
\begin{proposition}\label{parameterT}
The convex real projective structures on $T$ with the boundary curve being parabolic are parametrized by
$$\sigma_1(B_1), \sigma_1(B_2), \sigma_1(B_3), \tau_{111}(T_+)$$ and $\sigma_1(C),\sigma_2(C)$ where $C$ is a meridian curve so that $T\setminus C$ is a pair of pants.
\end{proposition}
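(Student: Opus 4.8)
The plan is to combine the parameter count for a single pair of pants (already established in Proposition \ref{oneparabolic}) with the two gluing parameters along the meridian $C$, taking care of the extra constraints forced by the identification $C = A_2 = A_3$. The target is to show that the six numbers $\sigma_1(B_1), \sigma_1(B_2), \sigma_1(B_3), \tau_{111}(T_+), \sigma_1(C), \sigma_2(C)$ form a genuine coordinate system on the $6$-dimensional space $\cal T_c$ of convex projective structures on $T$ with parabolic boundary.

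First I would recall the Goldman parametrization of such a structure: cutting $T$ along $C$ yields a pair of pants, so the data consist of the boundary invariants $\lambda(C), \tau(C), \lambda(B), \tau(B)$, the two internal parameters $(s,t)$, and the two gluing parameters $(u,v)$ along $C$. Imposing that $B$ be parabolic sets $\lambda(B)=1$, $\tau(B)=2$, $\mu(B)=1$, which removes two degrees of freedom and leaves exactly six, matching the claimed count. The next step is to verify that the proposed six coordinates recover all of the underlying Goldman data. Here I would invoke the explicit formulas from Proposition 4.1 and Proposition 5.2 of \cite{BK} together with the relations derived above: because $C=A_2=A_3$ forces $\lambda_2=\lambda_3$ and $\mu_2=\mu_3$, the shear relations collapse to $\sigma_1(B_2)=\log s=-\sigma_2(B_3)$ and $\sigma_1(B_1)-\sigma_2(B_1)=2\log s=\sigma_1(B_3)-\sigma_2(B_2)$, so that from $\sigma_1(B_1),\sigma_1(B_2),\sigma_1(B_3)$ one solves for $s$, $\lambda_2$ and $\mu_2$, hence for all the pair-of-pants data; then $\tau_{111}(T_+)$ determines $t$ (and consequently $\tau_{111}(T_-)$ is no longer free).

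Having recovered the internal and boundary data of the pair of pants, I would then show that $\sigma_1(C)$ and $\sigma_2(C)$ recover the gluing parameters $(u,v)$. This is immediate from the normalization $(u^0,v^0)=(0,0)$ and the linear relations $\sigma_1(C)=u-3v$, $\sigma_2(C)=u+3v$, which invert to $u=\tfrac{1}{2}(\sigma_1(C)+\sigma_2(C))$ and $v=\tfrac{1}{6}(\sigma_2(C)-\sigma_1(C))$. Since every Goldman coordinate $(s,t,\lambda_2,\mu_2,u,v)$ is thereby a function of the six proposed invariants, and conversely each invariant is a function of the Goldman data, the map between the two parametrizations is a bijection, establishing that the six numbers parametrize $\cal T_c$.

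The main obstacle I anticipate is not the linear inversion for $(u,v)$, which is routine, but rather confirming that the system $\sigma_1(B_1),\sigma_1(B_2),\sigma_1(B_3)\mapsto(s,\lambda_2,\mu_2)$ is genuinely invertible on the relevant range — i.e., that the collapsed relations coming from $\lambda_2=\lambda_3$, $\mu_2=\mu_3$ really determine three independent quantities from three shear coordinates without a hidden degeneracy, and that the formula $\mu_2=\tfrac{\tau_2-\sqrt{\tau_2^2-4/\lambda_2}}{2}$ stays in the range where the structure remains properly convex. Checking this amounts to tracking the explicit dependence in Proposition 4.1 of \cite{BK} and verifying the Jacobian is nonvanishing; I would handle it by exhibiting the inverse map explicitly rather than by an abstract nondegeneracy argument.
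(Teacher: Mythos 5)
Your proposal follows essentially the same route as the paper: it cuts $T$ along $C$, imposes the parabolic conditions $\lambda(B)=1,\tau(B)=2,\mu(B)=1$, uses the collapsed relations forced by $C=A_2=A_3$ (namely $\sigma_1(B_2)=\log s=-\sigma_2(B_3)$ and $\sigma_1(B_1)-\sigma_2(B_1)=2\log s=\sigma_1(B_3)-\sigma_2(B_2)$) to recover $s,\lambda_2,\mu_2$ and then $t$ from $\tau_{111}(T_+)$, and finally inverts $\sigma_1(C)=u-3v$, $\sigma_2(C)=u+3v$ for the gluing data, which is exactly the paper's argument in Section \ref{degeneration}. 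Your added attention to the invertibility of the map $\sigma_1(B_1),\sigma_1(B_2),\sigma_1(B_3)\mapsto(s,\lambda_2,\mu_2)$ is a reasonable refinement of a step the paper states without elaboration, but it does not change the approach.
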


When the boundary curve $B$ becomes quasi-hyperbolic, it determines a codimension 1 boundary of $\cal T$.
\begin{proposition}The space of convex real projective structures on $T$ with the boundary curve being quasi-hyperbolic, is parametrized by $\sigma_1(B_i),i=1,2,3, \sigma_1(C),\sigma_2(C), \tau_{111}(T_\pm).$
\end{proposition}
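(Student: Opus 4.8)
The plan is to run the same argument as in Proposition \ref{parameterT}, but with the boundary curve $B=A_1$ specialized to be quasi-hyperbolic as in Proposition \ref{quasi}, and then to track how the identification $C=A_2=A_3$ collapses the pair-of-pants data. First I would cut $T$ along the meridian $C$ to obtain a pair of pants whose boundary components are $A_1=B$ (quasi-hyperbolic) and $A_2=A_3=C$. Since the two boundary copies are the same curve, $\lambda_2=\lambda_3$ and $\mu_2=\mu_3$, while the quasi-hyperbolic condition gives $\lambda_1=\nu$, $\mu_1=\mu$ with $\mu^2\nu=1$, so that $\sqrt{\nu}=\mu^{-1}$. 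The free Goldman data of this pair of pants are then the five numbers $\mu,\ \lambda_2,\ \mu_2,\ s,\ t$, one more than in the parabolic case precisely because of the extra quasi-hyperbolic parameter $\mu$.

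Next I would substitute these specializations into the formulas of Proposition 4.1 of \cite{BK} already employed in Propositions \ref{quasi} and \ref{parameterT}. A direct computation gives
$$\sigma_1(B_2)=\log s=-\sigma_2(B_3),\quad \sigma_1(B_3)=\log\Big(\frac{s\mu_2}{\mu}\Big),\quad \sigma_1(B_1)=\log(s\mu_2\lambda_2\mu),$$
so that $\sigma_1(B_3)-\sigma_1(B_2)=\log(\mu_2/\mu)$ and $\sigma_1(B_1)-\sigma_1(B_3)=\log(\lambda_2\mu^2)$; the identity $\sigma_1(B_2)+\sigma_2(B_3)=0$ agrees with Proposition \ref{quasi} and serves as a consistency check. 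Combined with the triangle-invariant identity
$$\tau_{111}(T_+)+\tau_{111}(T_-)=\log(\mu_1\mu_2\mu_3)=\log(\mu\mu_2^2),$$
these relations let me invert the system explicitly: $\sigma_1(B_2)$ recovers $s$; eliminating $\mu_2$ between $\sigma_1(B_3)-\sigma_1(B_2)$ and the triangle-invariant sum yields $\mu^3$, hence $\mu$ and $\nu$; then $\mu_2$, then $\lambda_2$ from $\sigma_1(B_1)-\sigma_1(B_3)$, and finally $t$ from the explicit expression for $\tau_{111}(T_+)$. Thus the five invariants $\sigma_1(B_1),\sigma_1(B_2),\sigma_1(B_3),\tau_{111}(T_+),\tau_{111}(T_-)$ recover all five pair-of-pants parameters, and with them the remaining shears $\sigma_2(B_1),\sigma_2(B_2),\sigma_2(B_3)$.

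Finally, regluing along $C$ is governed by the two Goldman gluing parameters $(u,v)$, which by Proposition 5.2 of \cite{BK} and the normalization $(u^0,v^0)=(0,0)$ are exactly $\sigma_1(C)=u-3v$ and $\sigma_2(C)=u+3v$, independent of the pair-of-pants data. Counting $5+2=7$ matches the dimension of the codimension-one boundary stratum of $\cal T$, so the seven numbers $\sigma_1(B_i)\ (i=1,2,3),\ \tau_{111}(T_\pm),\ \sigma_1(C),\sigma_2(C)$ parametrize the claimed space.

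The main obstacle I expect is not the algebra but the invertibility and nondegeneracy of the parametrization. One must verify that the recovered values $\mu,\lambda_2,\mu_2,s,t$ satisfy the eigenvalue ordering $0<\lambda_i<\mu_i<\nu_i$ together with the positivity constraints cutting out genuine convex structures, so that the image is exactly the quasi-hyperbolic boundary stratum and nothing larger; and one must confirm that the sum $\tau_{111}(T_+)+\tau_{111}(T_-)$ genuinely separates $\mu$ once $\sigma_1(B_3)-\sigma_1(B_2)$ is fixed, i.e. that the relevant Jacobian is nonsingular. It is the quasi-hyperbolic relation $\mu^2\nu=1$ that makes this determinant nonzero, just as $\mu^2\nu=1$ forced $\sigma_1(B_2)+\sigma_2(B_3)=0$ in Proposition \ref{quasi}.
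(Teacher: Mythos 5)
Your proposal is correct and follows essentially the same route as the paper: cut $T$ along $C$, specialize the formulas of Proposition 4.1 of \cite{BK} to $A_1$ quasi-hyperbolic and $A_2=A_3=C$ (so $\lambda_2=\lambda_3$, $\mu_2=\mu_3$, and $\mu^2\nu=1$ forces $\sigma_1(B_2)=\log s=-\sigma_2(B_3)$), and add the two gluing shears along $C$. Your explicit inversion recovering $\mu,\mu_2,\lambda_2,s,t$ from the five pair-of-pants invariants is a welcome elaboration of what the paper leaves implicit, but it is not a different argument.
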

\begin{proof}From the proof of Theorem \ref{quasi}, if we set $B=A_1$, then $$\sigma_1(B_2)=-\sigma_2(B_3)$$ and $\sigma_1(B_2)=\log s$ using $\lambda_2=\lambda_3,\mu_2=\mu_3$.  Also we get
$$\sigma_1(B_1)=2\log s + \sigma_2(B_1), \sigma_1(B_3)=2\log s + \sigma_2(B_2).$$
Hence $\sigma_1(B_i),i=1,2,3, \tau_{111}(T_\pm), \sigma_1(C),\sigma_2(C)$ parametrize the space.
\end{proof}
For some related calculations and materials, see \cite{till}.


\vskip .1 in

\noindent     Inkang Kim\\
     School of Mathematics\\
     KIAS, Heogiro 85, Dongdaemen-gu\\
     Seoul, 02455, Korea\\
     \texttt{inkang\char`\@ kias.re.kr}

\begin{thebibliography}{99}
\bibitem{AC}I. Adeboye and D. Cooper, The area of convex projective surfaces and Fock-Goncharov coordinates, Journal of Topology and Analysis, 2018, To appear.
\bibitem{BGS} W. Ballmann, M. Gromov and V. Shroeder, Manifolds of nonpositive curvature, Birkh\"auser, 1985.
\bibitem{Ben}J-P Benz\'ecri, Sur les vari\'et\'es localement affines
et localement projectives, Bull. Soc. Math. France, 88 (1960),
229-332.
\bibitem{BD}F. Bonahon and G. Dreyer, Parametrizing Hitchin components, Duke Math. J. 163 (2014), no. 15, 2935-2975.
\bibitem{BK}F. Bonahon and I. Kim, The Goldman and Fock-Goncharov coordinates for convex projective structures on surfaces,  Geom. Dedicata, 192 (2018), 43-55.
\bibitem{till}A. Casella, D. Tate and S. Tillmann, Moduli spaces of real projective structures on surfaces, preprint.
\bibitem{CG}S. Choi and W. Goldman,  Convex real projective structures on closed surfaces are closed, Proc. AMS, 118 (1993), 657-661.
\bibitem{Col}B. Colbois, C. Vernicos and P. Verovic, L'aire des triangles id\'eaux en
g\'eom\'etrie de Hilbert,  Enseign. Math. (2) 50 (2004), no. 3-4, 203-237.

\bibitem{FG}V. Fock and A. Goncharov, Moduli spaces of convex projective structures on surfaces, Adv. Math. 208 (2007), no. 1, 249-273.
\bibitem{Eberlein}P. Eberlein, Geometry of nonpositively curved manifolds, Chicago Press, 1996.
\bibitem{FK} P. Foulon and I. Kim, Topological entropy and bulging deformation of real projective structures on surface, submitted.
\bibitem{GM} W. Goldman and J. Millson, Local rigidity of discrete groups acting on complex hyperbolic space, Inv. Math. 88 (1987), 495-520.
\bibitem{Goldman}W. Goldman, Convex real projective structures on
compact surfaces, JDG, 31 (1990), 791-845.
\bibitem{Go}W. Goldman, Bulging deformations of $\mathbb{RP}^2$-manifolds, unpublished manuscript.
\bibitem{KK}I. Kim and S. Kim, Primitive stable representations in higher rank semisimple Lie groups, submitted.
\bibitem{KP}I. Kim and J. Porti, Uniformly Primitive stable representations in semisimple Lie groups, in preparation.
\bibitem{La1}F. Labourie, Anosov flows, surface groups and curves in projective space, Invent. Math. 165 (2006), 51-114.
\bibitem{Lo2}J. Loftin, Affine spheres and convex $RP^2$ manifolds,
American Journal of Math. 123 (2) (2001), 255-274.
\bibitem{Lo}J. Loftin, The compactification of the Moduli space of
convex $RP^2$-surfaces I, JDG 68 (2004), 223-276.
\bibitem{Marquis}L. Marquis, Surface projective convexe de volume
fini,  Ann. Inst. Fourier 62 (2012), no. 1, 325-392.
\bibitem{Minsky}Y. Minsky, On dynamics of $Out(F_n)$ on $PSL_2(\mathbb C)$ characters, Israel J. of Math. 193 (2013), 47--70.
\end{thebibliography}
\end{document}